\newtheorem{theorem}{Theorem}[section]
\newtheorem{proposition}[theorem]{Proposition}
\newtheorem{corollary}[theorem]{Corollary}
\theoremstyle{definition}
\newtheorem{definition}[theorem]{Definition}
\newtheorem{example}[theorem]{Example}
\theoremstyle{remark}
\newtheorem{remark}[theorem]{Remark}
\numberwithin{equation}{section}
\begin{document}

\title{$(\sigma,\tau)$-amenability of $C^*$-algebras}
\author[M.\ Mirzavaziri, M.S.\ Moslehian]{M.\ Mirzavaziri and M.\ S.\ Moslehian}
\address{Department of Pure Mathematics, Ferdowsi University of Mashhad, P. O. Box 1159, Mashhad 91775, Iran;
\newline Centre of Excellence in Analysis on Algebraic Structures (CEAAS), Ferdowsi University of Mashhad, Iran.}
\email{madjid@mirzavaziri.com and mirzavaziri@math.um.ac.ir}
\email{moslehian@ferdowsi.um.ac.ir and moslehian@ams.org}
\subjclass[2000]{Primary 46H25; Secondary 47B47, 46L05}
\keywords{approximate identity, Banach algebra, Banach module,
$(\sigma,\tau)$-derivation, $(\sigma,\tau)$-inner derivation,
$(\sigma,\tau)$-amenability, $(\sigma,\tau)$-contractibility. }

\begin{abstract}
Suppose that ${\mathcal A}$ is an algebra, $\sigma,\tau:{\mathcal
A}\to{\mathcal A}$ are two linear mappings such that both
$\sigma({\mathcal A})$ and $\tau({\mathcal A})$ are subalgebras of
${\mathcal A}$ and ${\mathcal X}$ is a $\big(\tau({\mathcal
A}),\sigma({\mathcal A})\big)$-bimodule. A linear mapping
$D:{\mathcal A}\to {\mathcal X}$ is called a
$(\sigma,\tau)$-derivation if $D(ab)=D(a)\cdot\sigma(b)+\tau(a)\cdot
D(b)\\\,(a,b\in {\mathcal A})$. A $(\sigma,\tau)$-derivation $D$ is
called a $(\sigma,\tau)$-inner derivation if there exists an
$x\in{\mathcal X}$ such that $D$ is of the form either
$D_x^-(a)=x\cdot \sigma(a)-\tau(a)\cdot x\,\,(a\in {\mathcal A})$ or
$D_x^ +(a)=x\cdot \sigma(a)+\tau(a)\cdot x \,\,(a\in {\mathcal A})$.
A Banach algebra ${\mathcal A}$ is called $(\sigma,\tau)$-amenable
if every $(\sigma,\tau)$-derivation from ${\mathcal A}$ into a dual
Banach $\big(\tau({\mathcal
A}),\sigma({\mathcal A})\big)$-bimodule is $(\sigma,\tau)$-inner.\\
Studying some general algebraic aspects of
$(\sigma,\tau)$-derivations, we investigate the relation between
amenability and $(\sigma,\tau)$-amenability of Banach algebras in
the case when $\sigma, \tau$ are homomorphisms. We prove that if
$\mathfrak A$ is a $C^*$-algebra and $\sigma, \tau$ are
$*$-homomorphisms with $\ker(\sigma)=\ker(\tau)$, then ${\mathfrak
A}$ is $(\sigma, \tau)$-amenable if and only if $\sigma({\mathfrak
A})$ is amenable.
\end{abstract}

\maketitle

\section{Introduction and preliminaries}

The notion of an amenable Banach algebra was introduced by B.E.~
Johnson in his definitive monograph \cite{J1}. This class of Banach
algebras arises naturally out of the cohomology theory for Banach
algebras, the algebraic version of which was developed by
G.~Hochschild \cite{H1}. For a comprehensive account on amenability
the reader is referred to books \cite{DAL, RUN}.

\noindent Suppose that ${\mathcal A}$ is an algebra,
$\sigma,\tau:{\mathcal A}\to{\mathcal A}$ are two linear mappings
such that both $\sigma({\mathcal A})$ and $\tau({\mathcal A})$ are
subalgebras of ${\mathcal A}$ and ${\mathcal X}$ is a
$\big(\tau({\mathcal A}),\sigma({\mathcal A})\big)$-bimodule. A
linear mapping $D:{\mathcal A}\to {\mathcal X}$ is called a
$(\sigma,\tau)$-derivation if
\[D(ab)=D(a)\cdot\sigma(b)+\tau(a)\cdot D(b)\qquad (a,b\in {\mathcal A}).\]
We say that a $(\sigma,\tau)$-derivation $D$ is a
$(\sigma,\tau)$-inner derivation if it is of the form either
$D_x^-(a)=x\cdot\sigma(a)-\tau(a)\cdot x\,\,(a\in {\mathcal A})$ or
$D_x^+(a)=x\cdot\sigma(a)+\tau(a)\cdot x\,\,(a\in {\mathcal A})$ for
some $x\in{\mathcal X}$. For other approaches to generalized
derivations and their applications see \cite{B-M, BRE, B-V, M-M4}
and references therein. In particular, the automatic continuity
problem for $(\sigma,\tau)$-derivations is considered in \cite{M-M1,
M-M2, M-M3} and an achievement of continuity of
$(\sigma,\tau)$-derivations without linearity is given in
\cite{H-J-M-M}.

\noindent A wide range of examples are as follows (see \cite{M-M2}):
\begin{itemize}
\item[(i)]{Every ordinary derivation is an
$id_{{\mathcal A}}$-derivation, where $id_{{\mathcal A}}$ is the
identity map on the algebra ${\mathcal A}$.}
\item[(ii)]{Every endomorphism $\alpha$ on ${\mathcal A}$ is an
$\frac{\alpha}{2}$-derivation on ${\mathcal A}$.}
\item[(iii)]{A $\theta$-derivation is nothing than a
point derivation $d:{\mathcal A}\to{\mathbb C}$ at the character
$\theta$.}
\end{itemize}

\noindent A Banach algebra ${\mathcal A}$ is said to be
\textit{$(\sigma,\tau)$-amenable} (resp.
\textit{$(\sigma,\tau)$-contractible}) if every continuous
$(\sigma,\tau)$-derivation from ${\mathcal A}$ into a dual Banach
$\big(\tau({\mathcal A}),\sigma({\mathcal A})\big)$-bimodule
${\mathcal X}^*$ (a Banach $\big(\tau({\mathcal A}),\sigma({\mathcal
A})\big)$-bimodule ${\mathcal X}$, resp.) is $(\sigma,\tau)$-inner.
Recall that the bimodule structure of the dual space ${\mathcal
Y}^*$ of a normed bimodule ${\mathcal Y}$ over a Banach algebra
${\mathcal B}$ is defined via
$$(b\cdot f)(y)=f(yb)\,,\quad (f\cdot b)(y)=f(by) \qquad (b\in{\mathcal
B}, y\in{\mathcal Y}, f\in{\mathcal Y}^*)$$ If $\sigma=\tau$ we
simply use the terminologies $\sigma$-derivation,
$\sigma$-amenability, etc.

We establish some general algebraic properties of
$(\sigma,\tau)$-derivations and investigate the relation between
amenability and $(\sigma,\tau)$-amenability of Banach algebras. In
particular, we prove that if $\mathfrak A$ is a $C^*$-algebra and
$\sigma:{\mathfrak A}\to {\mathfrak A}$ is a $*$-homomorphism, then
${\mathfrak A}$ is $\sigma$-amenable if and only if
$\sigma({\mathfrak A})$ is amenable.

For definitions and elementary properties of Banach algebras we
refer the reader to \cite{DAL, PAL}.

%-------------------------------------------------------------------%

\section{Algebraic Aspects of  $(\sigma,\tau)$-derivations}

Throughout this section let ${\mathcal A}$ be a unital algebra with
unit $e$, let $\sigma,\tau:{\mathcal A}\to{\mathcal A}$ be linear
mappings such that both $\sigma({\mathcal A})$ and $\tau({\mathcal
A})$ be subalgebras of ${\mathcal A}$ and ${\mathcal X}$ is a
$\big(\tau({\mathcal A}),\sigma({\mathcal A})\big)$-bimodule. In the
case where $e\in \sigma({\mathcal A})\cup \tau({\mathcal A})$, we
assume that ${\mathcal X}$ is unit linked, i.e.
$ex=xe=e\,\,(x\in{\mathcal X})$. In this section we first give a
sufficient condition under which each $(\sigma,\tau)$-derivation on
$\mathcal A$ into $\mathcal X$ is of the form $D_x^+$ for some
$x\in{\mathcal X}$. We second show how the assumption that $\sigma$
and $\tau$ are homomorphisms ensures $D_x^+=0$ for each $x\in
{\mathcal X}$, and provide a suitable setting for study of
$(\sigma,\tau)$-derivations as well. Let us start our work with the
following definition.

%-------------------------------------------------------------------%

\begin{definition} Let
$x$ be a fixed element of ${\mathcal X}$. A linear mapping
$\varphi:{\mathcal A}\to{\mathcal A}$ is called a right (left,
resp.) $x$-homomorphism if
$x\cdot(\varphi(ab)-\varphi(a)\varphi(b))=0$ (
$(\varphi(ab)-\varphi(a)\varphi(b))\cdot x=0$, resp.) for each
$a,b\in\mathcal A$.
\end{definition}

%-------------------------------------------------------------------%

\begin{theorem} Suppose $\sigma(e)=\lambda e$ and $\tau(e)=(1-\lambda)e$ for some
nonzero number $\lambda\neq 1$. Then for each
$(\sigma,\tau)$-derivation $D:{\mathcal A}\to{\mathcal X}$ there is
an $x\in{\mathcal X}$ such that
\[D(a)=x\cdot
\frac{\sigma(a)}{\lambda}=\frac{\tau(a)}{1-\lambda}\cdot x \qquad
(a\in\mathcal A).\] In this case, $\frac{\sigma}{\lambda}$ is a
right $x$-homomorphism and $\frac{\tau}{1-\lambda}$ is a left
$x$-homomorphism.
\end{theorem}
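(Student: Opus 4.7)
The strategy is to use the unit $e$ of ${\mathcal A}$ as the pivot, with $x:=D(e)$ as the candidate. First observe that the unit-linked hypothesis on ${\mathcal X}$ is in force: since $\lambda\neq 0$, we have $\sigma(\lambda^{-1}e)=e$, so $e\in\sigma({\mathcal A})$ and therefore $e\cdot x=x\cdot e=x$ for every $x\in{\mathcal X}$. Now I would apply the $(\sigma,\tau)$-derivation identity to the two trivial factorizations $a=a\cdot e$ and $a=e\cdot a$. These yield, respectively,
\[
D(a)=D(a)\cdot\sigma(e)+\tau(a)\cdot D(e)=\lambda D(a)+\tau(a)\cdot D(e),
\]
\[
D(a)=D(e)\cdot\sigma(a)+\tau(e)\cdot D(a)=D(e)\cdot\sigma(a)+(1-\lambda)D(a),
\]
so that $(1-\lambda)D(a)=\tau(a)\cdot D(e)$ and $\lambda D(a)=D(e)\cdot\sigma(a)$. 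Since the scalars $\lambda$ and $1-\lambda$ are both nonzero, dividing and setting $x:=D(e)$ gives the claimed representation $D(a)=x\cdot\sigma(a)/\lambda=\tau(a)/(1-\lambda)\cdot x$.

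For the $x$-homomorphism assertion, I would evaluate $D(ab)$ in two ways. Directly from the displayed formula, $D(ab)=x\cdot\sigma(ab)/\lambda$. On the other hand, the derivation identity together with the two expressions for $D$ gives
\[
D(ab)=D(a)\cdot\sigma(b)+\tau(a)\cdot D(b)=\frac{x\cdot\sigma(a)\sigma(b)}{\lambda}+\frac{\tau(a)\cdot x\cdot\sigma(b)}{\lambda}.
\]
Equating the two representations of $D(a)$ at once yields the consequence $\tau(a)\cdot x=\tfrac{1-\lambda}{\lambda}\,x\cdot\sigma(a)$, which substituted above collapses the right-hand side to $\bigl(\tfrac{1}{\lambda}+\tfrac{1-\lambda}{\lambda^{2}}\bigr)x\cdot\sigma(a)\sigma(b)=x\cdot\sigma(a)\sigma(b)/\lambda^{2}$. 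Comparing with the first expression gives
\[
x\cdot\bigl(\sigma(ab)/\lambda-\sigma(a)\sigma(b)/\lambda^{2}\bigr)=0,
\]
which is exactly the statement that $\sigma/\lambda$ is a right $x$-homomorphism. A symmetric calculation, pivoting on $D(a)=\tau(a)/(1-\lambda)\cdot x$ and the dual identity $x\cdot\sigma(a)=\tfrac{\lambda}{1-\lambda}\,\tau(a)\cdot x$, shows that $\tau/(1-\lambda)$ is a left $x$-homomorphism.

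I do not anticipate a genuine obstacle: the whole argument is algebra of the derivation identity at $e$. The only minor point worth stressing is that the hypotheses $\sigma(e)=\lambda e$ and $\tau(e)=(1-\lambda)e$ together force $\sigma(a)+\tau(a)=\lambda a+(1-\lambda)a=a$ only when $a=e$, so $\sigma$ and $\tau$ need not be related beyond the unit; nevertheless, the derivation identity at $e$ alone is strong enough to pin down $D$ on all of ${\mathcal A}$. A pleasant side observation is that the two representations combine to give $x\cdot\sigma(a)+\tau(a)\cdot x=\lambda D(a)+(1-\lambda)D(a)=D(a)$, so $D=D_{x}^{+}$ in the sense of the introduction.
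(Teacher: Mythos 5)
Your proposal is correct and follows essentially the same route as the paper: evaluate the derivation identity at $a e$ and $e a$ to get $(1-\lambda)D(a)=\tau(a)\cdot D(e)$ and $\lambda D(a)=D(e)\cdot\sigma(a)$, then compute $D(ab)$ two ways to extract the $x$-homomorphism identities. You even tidy up two points the paper leaves implicit — the verification that $e\in\sigma(\mathcal A)$ makes the unit-linked hypothesis operative, and the needless case split on whether $D(e)=0$ — so no gaps to report.
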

\begin{proof} It follows from
\[D(a)=D(a)\cdot\sigma(e)+\tau(a)\cdot D(e)=\lambda
D(a)+\tau(a)\cdot D(e)\] that
\[(1-\lambda)D(a)=\tau(a)\cdot D(e).\]
Now if $D(e)=0$ then $D(a)=0$ for each $a\in\mathcal A$. Hence we
may suppose that $x:=D(e)\neq0$. Thus $D(a)=\frac{\tau(a)\cdot
x}{1-\lambda}$. A similar argument shows that
$D(a)=\frac{x\cdot\sigma(a)}{\lambda}$. Now we have
\begin{eqnarray*} \frac{\tau(ab)}{1-\lambda}\cdot x&=&D(ab)\\
&=&D(a)\cdot\sigma(b)+\tau(a)\cdot D(b)\\
&=&\frac{\tau(a)\cdot
x}{1-\lambda}\cdot\sigma(b)+\tau(a)\cdot\frac{\tau(b)\cdot
x}{1-\lambda}\\
&=&\frac{\lambda\tau(a)
}{1-\lambda}\cdot\frac{x\cdot\sigma(b)}{\lambda}+\frac{\tau(a)\tau(b)\cdot
x}{1-\lambda}\\
&=&\frac{\lambda\tau(a) }{1-\lambda}\cdot\frac{\tau(b)\cdot
x}{1-\lambda}+\frac{\tau(a)\tau(b)\cdot
x}{1-\lambda}\\
&=&\frac{\tau(a)\tau(b)}{(1-\lambda)^2}\cdot
x\\
&=&\frac{\tau(a)}{1-\lambda}\frac{\tau(b)}{1-\lambda}\cdot x.
\end{eqnarray*}
We can therefore deduce that $\frac{\tau}{1-\lambda}$ is a left
$x$-homomorphism. By a similar argument we see that
$\frac{\sigma}{\lambda}$ is a right $x$-homomorphism.
\end{proof}

%-------------------------------------------------------------------%

\begin{corollary} Suppose that $\sigma(e)=\lambda e$ and $\tau(e)=(1-\lambda)e$ for some
nonzero number $\lambda\neq 1$. Then for each $\big(\tau({\mathcal
A}),\sigma({\mathcal A})\big)$-bimodule $\mathcal X$ and each
$(\sigma,\tau)$-derivation $D:{\mathcal A}\to{\mathcal X}$, there
are an $x\in{\mathcal X}$ and two linear mappings
$\Sigma,T:{\mathcal A}\to{\mathcal A}$ such that $2\Sigma$ is a
right $x$-homomorphism, $2T$ is a left $x$-homomorphism, $D$ is a
$(\Sigma,T)$-derivation and
\[D(a)=2x\cdot\Sigma(a)=2T(a)\cdot x=
x\cdot\Sigma(a)+T(a)\cdot x\qquad (a\in\mathcal A).\]
\end{corollary}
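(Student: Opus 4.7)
The plan is to take the conclusion of the previous Theorem and simply \emph{rescale} $\sigma$ and $\tau$ by the right factors so that the ``symmetric'' form $x\cdot\Sigma(a)+T(a)\cdot x$ emerges. The natural choice is
\[
x:=D(e),\qquad \Sigma:=\frac{\sigma}{2\lambda},\qquad T:=\frac{\tau}{2(1-\lambda)},
\]
which is well defined since $\lambda\neq 0,1$. The degenerate case $D(e)=0$ forces $D\equiv 0$ by the computation in the proof of the Theorem, so the conclusion holds trivially; hence I may focus on the case $x\neq 0$.

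First I would read off the three equalities for $D(a)$. The Theorem gives $D(a)=x\cdot\frac{\sigma(a)}{\lambda}=\frac{\tau(a)}{1-\lambda}\cdot x$, which can be rewritten as $D(a)=2x\cdot\Sigma(a)=2T(a)\cdot x$. Averaging these two expressions yields $D(a)=x\cdot\Sigma(a)+T(a)\cdot x$, which is the third required identity. Next, the Theorem says that $\sigma/\lambda=2\Sigma$ is a right $x$-homomorphism and $\tau/(1-\lambda)=2T$ is a left $x$-homomorphism, so those properties are immediate from the choice of $\Sigma$ and $T$.

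The only nontrivial point is checking that $D$ is a $(\Sigma,T)$-derivation, i.e.\
\[
D(ab)=D(a)\cdot\Sigma(b)+T(a)\cdot D(b).
\]
For this I would compute each term separately using the identities above. Using $D(a)=x\cdot\frac{\sigma(a)}{\lambda}$ and the fact that $\sigma/\lambda$ is a right $x$-homomorphism gives $x\cdot\frac{\sigma(a)\sigma(b)}{\lambda^2}=x\cdot\frac{\sigma(ab)}{\lambda}$, so
\[
D(a)\cdot\Sigma(b)=\frac{1}{2\lambda}\,D(a)\cdot\sigma(b)=\frac{1}{2\lambda}\,x\cdot\frac{\sigma(a)\sigma(b)}{\lambda}\cdot\lambda=\tfrac12\,D(ab).
\]
Symmetrically, using $D(b)=\frac{\tau(b)\cdot x}{1-\lambda}$ together with the left $x$-homomorphism property of $\tau/(1-\lambda)$ yields $T(a)\cdot D(b)=\tfrac12 D(ab)$, and adding the two halves gives $D(ab)$ as desired.

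The main obstacle I anticipate is merely bookkeeping: the scalars $\lambda$ and $1-\lambda$ must be distributed correctly so that the right $x$-homomorphism identity ($x\cdot\sigma(a)\sigma(b)=\lambda x\cdot\sigma(ab)$) and the left $x$-homomorphism identity ($\tau(a)\tau(b)\cdot x=(1-\lambda)\tau(ab)\cdot x$) collapse each summand to $\tfrac12 D(ab)$. There is no conceptual difficulty once the definitions $\Sigma=\sigma/(2\lambda)$ and $T=\tau/(2(1-\lambda))$ are in place; the corollary is essentially a reformatting of the Theorem.
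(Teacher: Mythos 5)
Your proposal is correct and coincides with the paper's proof, which consists solely of the line ``Put $\Sigma=\frac{\sigma}{2\lambda}$ and $T=\frac{\tau}{2(1-\lambda)}$''; you have simply written out the verification (the two identities $x\cdot\sigma(a)\sigma(b)=\lambda\, x\cdot\sigma(ab)$ and $\tau(a)\tau(b)\cdot x=(1-\lambda)\,\tau(ab)\cdot x$ collapsing each summand to $\tfrac12 D(ab)$) that the paper leaves implicit.
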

\begin{proof} Put $\Sigma=\frac{\sigma}{2\lambda}$ and
$T=\frac{\tau}{2(1-\lambda)}$.
\end{proof}
Thus we have

%-------------------------------------------------------------------%

\begin{theorem} Let $\mathcal A$ be a unital algebra with unit
$e$. Suppose that $\sigma,\tau:{\mathcal A}\to{\mathcal A}$ are
linear mappings such that both $\sigma({\mathcal A})$ and
$\tau({\mathcal A})$ are subalgebras of ${\mathcal A}$ and that
$\sigma(e)=\lambda e$ and $\tau(e)=(1-\lambda)e$ for some nonzero
number $\lambda\neq 1$. Then every $(\sigma,\tau)$-derivation is
$(\sigma,\tau)$-inner.
\end{theorem}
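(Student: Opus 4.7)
The plan is to leverage the previous Theorem directly, since it already produces an element $x\in\mathcal X$ representing $D$ in two parallel ways, one in terms of $\sigma$ and one in terms of $\tau$. Concretely, applying that Theorem (with the same hypotheses on $\sigma(e),\tau(e)$) yields an $x\in\mathcal X$ with
\[
D(a)=\frac{x\cdot\sigma(a)}{\lambda}=\frac{\tau(a)\cdot x}{1-\lambda}\qquad(a\in\mathcal A),
\]
so that the two ``one-sided'' identities
\[
x\cdot\sigma(a)=\lambda\,D(a),\qquad \tau(a)\cdot x=(1-\lambda)\,D(a)
\]
hold simultaneously. This is well-defined precisely because the assumption $\lambda\neq 0,1$ makes both $\lambda$ and $1-\lambda$ invertible as scalars.

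The key observation is that the coefficients $\lambda$ and $1-\lambda$ add up to $1$. Adding the two identities therefore gives
\[
x\cdot\sigma(a)+\tau(a)\cdot x=\lambda\,D(a)+(1-\lambda)\,D(a)=D(a),
\]
which is exactly $D(a)=D_x^{+}(a)$ in the notation of the paper. Thus $D$ is a $(\sigma,\tau)$-inner derivation, witnessed by the element $x=D(e)$ (if $D(e)=0$, the previous Theorem already forces $D\equiv 0$, which is $D_0^{+}$).

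There is essentially no obstacle here beyond invoking the prior Theorem, since the verification reduces to a one-line linear combination. The only thing to be careful about is the trivial case $D(e)=0$, and the use of the bimodule axioms implicit in writing $x\cdot\sigma(a)+\tau(a)\cdot x$ as an element of $\mathcal X$; both are handled automatically by the structural hypotheses on $\mathcal X$ being a $\big(\tau(\mathcal A),\sigma(\mathcal A)\big)$-bimodule and by $\sigma(\mathcal A),\tau(\mathcal A)$ being subalgebras of $\mathcal A$. So the proof is a short corollary: extract $x$ from the previous Theorem, add the two expressions, and conclude $D=D_x^{+}$.
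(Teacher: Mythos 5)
Your proof is correct and follows essentially the same route as the paper, which states this theorem as an immediate consequence (``Thus we have'') of the preceding Theorem and Corollary: extract $x=D(e)$ with $x\cdot\sigma(a)=\lambda D(a)$ and $\tau(a)\cdot x=(1-\lambda)D(a)$, and add. Your version has the minor virtue of making explicit that the resulting inner form is $D_x^{+}$ with respect to $\sigma$ and $\tau$ themselves (not merely with respect to the rescaled maps $\Sigma$, $T$ of the Corollary), which is what the statement actually requires.
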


%-------------------------------------------------------------------%

\begin{proposition}\label{Madjid} Let $x$ be a fixed element
of $\mathcal X$. Then $D_x^-(a)=x\cdot\sigma(a)-\tau(a)\cdot x$ is a
$(\sigma,\tau)$-derivation if and only if
\[x\cdot(\sigma(ab)-\sigma(a)\sigma(b))=(\tau(ab)-\tau(a)\tau(b))\cdot
x\] for all $a,b\in{\mathcal A}$.
\end{proposition}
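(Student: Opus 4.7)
The plan is essentially a direct computation: expand both sides of the defining identity for a $(\sigma,\tau)$-derivation when $D$ equals $D_x^-$, and show that the equality forced on us is precisely the stated condition.

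First I would compute $D_x^-(ab) = x\cdot\sigma(ab) - \tau(ab)\cdot x$, and separately expand
\[
D_x^-(a)\cdot\sigma(b) + \tau(a)\cdot D_x^-(b) = \bigl(x\cdot\sigma(a) - \tau(a)\cdot x\bigr)\cdot\sigma(b) + \tau(a)\cdot\bigl(x\cdot\sigma(b) - \tau(b)\cdot x\bigr).
\]
Using associativity of the two one-sided module actions and the fact that $\mathcal X$ is a genuine $(\tau(\mathcal A),\sigma(\mathcal A))$-bimodule (so the left and right actions commute), the two middle terms $-\tau(a)\cdot x\cdot\sigma(b)$ and $+\tau(a)\cdot x\cdot\sigma(b)$ cancel, leaving $x\cdot\sigma(a)\sigma(b) - \tau(a)\tau(b)\cdot x$.

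Then $D_x^-$ being a $(\sigma,\tau)$-derivation is exactly the identity
\[
x\cdot\sigma(ab) - \tau(ab)\cdot x \;=\; x\cdot\sigma(a)\sigma(b) - \tau(a)\tau(b)\cdot x,
\]
which after rearrangement is equivalent to
\[
x\cdot\bigl(\sigma(ab)-\sigma(a)\sigma(b)\bigr) \;=\; \bigl(\tau(ab)-\tau(a)\tau(b)\bigr)\cdot x.
\]
Since each step is an equivalence (addition and subtraction in $\mathcal X$), we get both implications simultaneously.

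There is no real obstacle here; the only subtlety worth mentioning in the write-up is the bimodule compatibility $\tau(a)\cdot(x\cdot\sigma(b)) = (\tau(a)\cdot x)\cdot\sigma(b)$, which is what makes the cross terms cancel. Once that is noted, the proof is a one-line algebraic manipulation in each direction.
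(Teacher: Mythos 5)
Your proposal is correct and follows exactly the same route as the paper's own proof: expand $D_x^-(ab)$ and $D_x^-(a)\cdot\sigma(b)+\tau(a)\cdot D_x^-(b)$, cancel the cross terms using the bimodule compatibility, and observe that the remaining identity is equivalent to the stated condition. Your explicit remark about why the cross terms cancel is a nice touch, but the argument is the same.
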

\begin{proof}
$D_x^-$ is a $(\sigma,\tau)$-derivation if and only if
\begin{eqnarray*}
x\sigma(ab)-\tau(ab)x&=&D_x^-(ab)\\&=&D_x^-(a)\sigma(b)+\tau(a)D_x^-(b)\\
&=&\big(x\sigma(a)-\tau(a)x\big)\sigma(b)+\tau(a)\big(x\sigma(b)-\tau(b)x\big)\,,
\end{eqnarray*}
or equivalently
\[x\cdot(\sigma(ab)-\sigma(a)\sigma(b))=(\tau(ab)-\tau(a)\tau(b))\cdot
x\]
\end{proof}

%-------------------------------------------------------------------%

Let $\mathcal X\sigma(\mathcal A):=\{x\sigma(a): x\in{\mathcal X},
a\in{\mathcal A}\}=\{0\}$ (resp. $\tau(\mathcal A)\mathcal X=\{0\}$)
and $a\mathcal X=\{0\}$ (resp. $\mathcal X a=\{0\}$) implies that
$a=0$. If all mappings $D_x^-$ are $(\sigma,\tau)$-derivation, then
Proposition \ref{Madjid} implies that $\sigma$ (resp. $\tau$) is
necessarily a homomorphism. This pathological observation and the
following proposition let us reasonably assume that $\sigma$ and
$\tau$ are unital homomorphisms when we deal with
$(\sigma,\tau)$-amenability (or $(\sigma,\tau)$-contractibility).

%-------------------------------------------------------------------%

\begin{proposition}\label{dplus} Let $\sigma,\tau:{\mathcal
A}\to{\mathcal A}$ be two unital homomorphisms. Then
$D_x^+:{\mathcal A}\to{\mathcal X}$ defined by
$D_x^+(a)=x\cdot\sigma(a)+\tau(a)\cdot x$ is a
$(\sigma,\tau)$-derivation if and only if $D_x^+=0$.
\end{proposition}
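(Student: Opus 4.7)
The plan is to evaluate $D_x^+$ at the identity element $e$ in two different ways and compare the results. The reverse direction of the biconditional is trivial, since the zero map is always a $(\sigma,\tau)$-derivation, so the task reduces to showing: if $D_x^+$ is a $(\sigma,\tau)$-derivation, then $x=0$.

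First I would exploit the assumption that $\sigma$ and $\tau$ are unital homomorphisms, so $\sigma(e)=\tau(e)=e$. Since $e\in\sigma(\mathcal A)\cup\tau(\mathcal A)$, the standing hypothesis of Section~2 guarantees that $\mathcal X$ is unit linked, i.e.\ $e\cdot x = x\cdot e = x$ for every $x\in\mathcal X$. Using the defining formula, $D_x^+(e)$ computes directly as $x\cdot\sigma(e) + \tau(e)\cdot x = x+x = 2x$.

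Second, I would apply the $(\sigma,\tau)$-derivation identity to the product $e=e\cdot e$ to obtain
\[ D_x^+(e) = D_x^+(e)\cdot\sigma(e) + \tau(e)\cdot D_x^+(e) = D_x^+(e) + D_x^+(e) = 2D_x^+(e), \]
which forces $D_x^+(e)=0$. Combined with the first computation $D_x^+(e)=2x$, this yields $2x=0$ and hence $x=0$, so that $D_x^+\equiv 0$.

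There is no genuine obstacle in this argument; the only point requiring a moment of care is invoking the unit-linked property of $\mathcal X$, which is precisely what ensures the simplifications $x\cdot\sigma(e)=x$ and $\tau(e)\cdot x = x$ used in the direct evaluation of $D_x^+(e)$.
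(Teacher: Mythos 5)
Your argument is correct, but it follows a genuinely different route from the paper's. The paper expands $D_x^+(ab)$ in two ways for general $a,b$: using that $\sigma,\tau$ are homomorphisms it gets $x\cdot\sigma(a)\sigma(b)+\tau(a)\tau(b)\cdot x$, and using the derivation identity it gets $x\cdot\sigma(a)\sigma(b)+2\,\tau(a)\cdot x\cdot\sigma(b)+\tau(a)\tau(b)\cdot x$; cancelling yields $\tau(a)\cdot x\cdot\sigma(b)=0$ for all $a,b$, and only then does it specialize $a=e$ and $b=e$ to conclude $x\cdot\sigma(b)=0$ and $\tau(a)\cdot x=0$, hence $D_x^+=0$. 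You instead specialize immediately to $a=b=e$: the standard fact that a derivation into a unit-linked module kills the identity gives $D_x^+(e)=0$, while direct evaluation gives $D_x^+(e)=2x$, so $x=0$. Both proofs use exactly the same hypotheses (unitality of $\sigma,\tau$ and the unit-linked standing assumption of Section~2, which applies here since $\sigma(e)=e\in\sigma(\mathcal A)$). Your version is shorter and delivers the formally stronger conclusion $x=0$ rather than merely $x\cdot\sigma(\mathcal A)=\tau(\mathcal A)\cdot x=\{0\}$; the paper's version isolates the identity $\tau(a)\cdot x\cdot\sigma(b)=0$ as the precise obstruction, which is the more informative statement if one wanted to drop unitality. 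One cosmetic remark: your opening sentence says the task reduces to showing $x=0$, which is slightly stronger than what the proposition asserts ($D_x^+=0$); this is harmless since you do prove $x=0$, but you should not present it as a logically necessary reduction.
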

\begin{proof} We have
\begin{eqnarray*} &&x\cdot\sigma(a)\sigma(b)+\tau(a)\tau(b)\cdot x\\&=&
x\cdot\sigma(ab)+\tau(ab)\cdot x\\&=&D_x^+(ab)\\
&=&D_x^+(a)\cdot\sigma(b)+\tau(a)\cdot D_x^+(b)\\
&=&(x\cdot\sigma(a)+\tau(a)\cdot
x)\cdot\sigma(b)+\tau(a)\cdot(x\cdot\sigma(b)+\tau(b)\cdot
x)\\
&=&x\cdot\sigma(a)\sigma(b)+\tau(a)\cdot
x\cdot\sigma(b)+\tau(a)\cdot x\cdot\sigma(b)+\tau(a)\tau(b)\cdot
x.\end{eqnarray*} Thus $\tau(a)\cdot x\cdot\sigma(b)=0$ for all
$a,b\in\mathcal A$. putting $a=e$ we have $x\cdot\sigma(b)=0$ and
putting $b=e$ we have $\tau(a)\cdot x=0$. Thus
$D_x^+(a)=x\cdot\sigma(a)+\tau(a)\cdot x=0$.
\end{proof}

%-------------------------------------------------------------------%

\section{$\sigma$-amenability of $C^*$-algebras}

Throughout this section, let ${\mathcal A}$ be a Banach algebra, let
$\sigma,\tau:{\mathcal A}\to{\mathcal A}$ be homomorphisms and let
${\mathcal X}$ be a Banach $\big(\tau({\mathcal A}),\sigma({\mathcal
A})\big)$-bimodule. We also assume that all mappings under
consideration are linear and continuous. In this section we study
some interrelations between $(\sigma,\tau)$-amenability and ordinary
amenability. Let us start our works with the following easy
observation.

%-------------------------------------------------------------------%
\begin{proposition}\label{cor1}
If $\mathcal A$ is amenable (contractible), then  $\mathcal A$ is
$(\sigma,\tau)$-amenable ($(\sigma,\tau)$-contractible) for every
two homomorphisms $\sigma$ and $\tau$.
\end{proposition}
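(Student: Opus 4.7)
The plan is to reduce $(\sigma,\tau)$-amenability to ordinary amenability by repackaging the action on $\mathcal{X}^*$ through $\sigma$ and $\tau$. Given a continuous $(\sigma,\tau)$-derivation $D:\mathcal{A}\to\mathcal{X}^*$, I would define new $\mathcal{A}$-actions on $\mathcal{X}^*$ via $a\triangleright f:=\tau(a)\cdot f$ and $f\triangleleft a:=f\cdot\sigma(a)$. Because $\sigma$ and $\tau$ are homomorphisms, the associativity identities $(ab)\triangleright f=a\triangleright(b\triangleright f)$ and $(f\triangleleft a)\triangleleft b=f\triangleleft(ab)$ together with the commuting-actions axiom $(a\triangleright f)\triangleleft b=a\triangleright(f\triangleleft b)$ reduce to identities already holding in the original $(\tau(\mathcal{A}),\sigma(\mathcal{A}))$-bimodule. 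Hence $\mathcal{X}^*$ becomes a Banach $\mathcal{A}$-bimodule under $\triangleright$ and $\triangleleft$.

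I would then verify that this new structure is again a dual Banach $\mathcal{A}$-bimodule: one equips the predual $\mathcal{X}$ with the $\mathcal{A}$-bimodule structure obtained by pulling back its original actions through $\sigma$ and $\tau$, and checks against the convention $(b\cdot f)(y)=f(yb)$, $(f\cdot b)(y)=f(by)$ stated in the introduction that the induced dual $\mathcal{A}$-actions on $\mathcal{X}^*$ agree with $\triangleright$ and $\triangleleft$.

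Rewritten in the new notation, the $(\sigma,\tau)$-Leibniz identity becomes $D(ab)=D(a)\triangleleft b+a\triangleright D(b)$, so $D$ is an ordinary continuous derivation from $\mathcal{A}$ into the dual Banach $\mathcal{A}$-bimodule $\mathcal{X}^*$. Amenability of $\mathcal{A}$ then produces $f\in\mathcal{X}^*$ with $D(a)=a\triangleright f-f\triangleleft a=\tau(a)\cdot f-f\cdot\sigma(a)$ for all $a\in\mathcal{A}$; taking $x=-f$ yields $D=D_x^-$, so $D$ is $(\sigma,\tau)$-inner. The contractibility case runs through the same argument with a general Banach bimodule $\mathcal{X}$ in place of $\mathcal{X}^*$, and the dual-module verification drops out. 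The only step I expect to require care is precisely this dual-bimodule check, but it amounts to evaluating both sides on a typical element and is routine given continuity of $\sigma$ and $\tau$.
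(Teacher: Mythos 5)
Your proposal is correct and follows essentially the same route as the paper: both pull the bimodule actions back along $\sigma$ and $\tau$ to turn $\mathcal{X}^*$ into a dual Banach $\mathcal{A}$-bimodule, observe that $D$ then satisfies the ordinary Leibniz rule, and invoke amenability to get an implementing functional $f$, which (up to a harmless sign) exhibits $D$ as $D^-_x$. Your extra care in checking that the pulled-back structure is still a \emph{dual} bimodule via the predual is exactly the point the paper handles implicitly by defining the new actions on $\mathcal{X}$ itself.
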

\begin{proof}
We only prove the statement concerning amenability. The other case
can be similarly proved.\\
Let ${\mathcal X}$ be a Banach $\big(\tau({\mathcal
A}),\sigma({\mathcal A})\big)$-bimodule and $D:{\mathcal
A}\longrightarrow{\mathcal X^*}$ be a $(\sigma,\tau)$-derivation.
One can regard ${\mathcal X}$ as a Banach ${\mathcal A}$-bimodule
via
\begin{eqnarray*}
a\cdot x=\tau(a)x\,,\quad  x\cdot a=x\sigma(a)\qquad (a\in{\mathcal
A}, x\in{\mathcal X}).
\end{eqnarray*}
Then $D(ab)=D(a)\sigma(b)+ \tau(a)D(b)=D(a)\cdot b+\tau(a)\cdot
D(b)$. Therefore $D$ is a derivation, and so, by amenability of
${\mathcal A}$, there exists $f\in{\mathcal X}^*$ such that
$D(a)=f\cdot a-a\cdot f=f\sigma(a)-\tau(a)f$.
\end{proof}

%-------------------------------------------------------------------%

An interesting question is whether we can have a similar result but
in contrary to Proposition \ref{cor1}.

\begin{example} Let ${\mathcal A}$ be a non amenable Banach algebra, let $\tilde{{\mathcal
A}}={\mathbb C}\oplus{\mathcal A}$ be its unitization and
$\sigma:\tilde{{\mathcal A}}\to\tilde{{\mathcal A}}$ be defined by
$\sigma(z\oplus a)=z$ for all $z\in{\mathbb C}$ and $a\in{\mathcal
A}$. Then $\tilde{{\mathcal A}}$ is not amenable (see \cite{RUN})
but is $\sigma$-amenable. To prove the $\sigma$-amenability of
$\tilde{{\mathcal A}}$ let ${\mathcal X}$ be a Banach space and
$D:\tilde{{\mathcal A}}\to{\mathcal X}$ be a $\sigma$-derivation.
Define $d:{\mathbb C}\to{\mathcal X}$ by $d(z)=D(z\oplus0)$. Then
$d$ is an ordinary derivation on ${\mathbb C}$, which is $0$. Thus
$D(z\oplus a)=D(z\oplus 0)+D(0\oplus a)=d(z)+D(0\oplus a)=D(0\oplus
a)$. Now we have
\begin{eqnarray*}
D(zw\oplus zb+wa+ab)&=&D((z\oplus a)(w\oplus b))\\&=&D(z\oplus
a)\sigma(w\oplus b)+\sigma(z\oplus a)D(w\oplus b)\\&=&D(0\oplus
a)w+zD(0\oplus b).
\end{eqnarray*} Putting $z=w=0$ and $b=1$ we get $D(0\oplus
a)=0$. Thus $D(z\oplus a)=D(0\oplus a)=0$ for all $z\in{\mathbb C}$
and $a\in{\mathcal A}$.
\end{example}

%-------------------------------------------------------------------%

\section{Correspondence Between $(\sigma,\tau)$-Derivations and
Derivations on $C^*$-Algebras}

In this section we fix a $C^*$-algebra ${\mathfrak A}$ and a Banach
$\big(\tau({\mathfrak A}),\sigma({\mathfrak A})\big)$-bimodule
$\mathcal X$ and give a correspondence between
$(\sigma,\tau)$-derivations on ${\mathfrak A}$ and ordinary
derivations on a certain algebra related to ${\mathfrak A}$. To
state the result, we give some terminology.

Let $\sigma,\tau:{\mathfrak A}\to{\mathfrak A}$ be two
$*$-homomorphisms. The mapping $\psi: {\mathfrak A} \to {\mathfrak
A} \oplus {\mathfrak A}$ defined by
$\psi(a)=\big(\sigma(a),\tau(a)\big)$ is a $*$-homomorphism, where
$\oplus$ denotes the $C^*$-direct sum. Hence ${\mathfrak
A}_{(\sigma,\tau)}:=\psi({\mathfrak A})=\{(\sigma(a),\tau(a)):
a\in{\mathfrak A}\}$ is a $C^*$-algebra which is isometrically
isomorphic to ${\mathfrak A}/{\ker(\sigma)\cap\ker(\tau)}$. If
$\mathcal X$ is a Banach $\big(\tau({\mathfrak A}),\sigma({\mathfrak
A})\big)$-bimodule, then $\mathcal X$ can be regarded as an
${\mathfrak A}_{(\sigma,\tau)}$-bimodule via the operations
\begin{eqnarray*}
x\cdot(\sigma(a),\tau(a))=x\cdot\sigma(a)\,,\quad
(\sigma(a),\tau(a))\cdot x&=&\tau(a)\cdot x\qquad (a\in {\mathfrak
A}, x\in {\mathcal X}).
\end{eqnarray*}
Then to each linear mapping $D:{\mathfrak A}\to\mathcal X$ there
corresponds a linear mapping $d:{\mathfrak
A}_{(\sigma,\tau)}\to\mathcal X$ by $d(\sigma(a),\tau(a))=D(a)$ in
such a way that $D$ is a $(\sigma,\tau)$-derivation if and only if
$d$ is an ordinary derivation. Now we are in the situation to
present the results of this section.

\begin{proposition} Each $(\sigma,\tau)$-derivation $D:{\mathfrak
A}\to{\mathcal X}$ is $(\sigma,\tau)$-inner if and only each
derivation $d:{\mathfrak A}_{(\sigma,\tau)}\to{\mathcal X}$ is
inner.
\end{proposition}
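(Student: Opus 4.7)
The plan is to exploit the bijection $D\leftrightarrow d$ set up in the paragraph preceding the proposition: the rule $d(\sigma(a),\tau(a))=D(a)$ is asserted to convert $(\sigma,\tau)$-derivations on $\mathfrak{A}$ into ordinary derivations on $\mathfrak{A}_{(\sigma,\tau)}$ with respect to the displayed bimodule structure, and vice versa. Since the proposition is a biconditional about the \emph{inner} halves of these correspondences, the proof essentially reduces to a definition-chase.

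The heart of the argument is the identity
\[
x\cdot(\sigma(a),\tau(a)) - (\sigma(a),\tau(a))\cdot x \;=\; x\cdot\sigma(a) - \tau(a)\cdot x \;=\; D_x^-(a),
\]
which follows at once from the prescribed bimodule formulas. Reading it left-to-right: if $d(y)=x\cdot y - y\cdot x$ is inner on $\mathfrak{A}_{(\sigma,\tau)}$, then the corresponding $D$ equals $D_x^-$ and is $(\sigma,\tau)$-inner. Reading it right-to-left gives the converse. For the $D_x^+$ form of $(\sigma,\tau)$-inner derivation, I would invoke Proposition \ref{dplus}: since $\sigma$ and $\tau$ are unital $*$-homomorphisms in the present setting, any $D_x^+$ which happens to be a $(\sigma,\tau)$-derivation must vanish identically, and the corresponding $d$ is the zero (trivially inner) derivation. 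Combining the two cases yields both implications of the proposition.

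The only step I expect to require real care is well-definedness of $d$ from $D$, since the correspondence implicitly quotients by $\ker\psi=\ker\sigma\cap\ker\tau$. One must verify that every continuous $(\sigma,\tau)$-derivation annihilates this closed ideal. I would handle this by taking a bounded approximate identity $(e_\lambda)$ for $\ker\sigma\cap\ker\tau$ — available because it is a closed two-sided ideal in a $C^*$-algebra — and applying the $(\sigma,\tau)$-Leibniz rule to $a e_\lambda$ for $a\in\ker\sigma\cap\ker\tau$: since $\sigma(e_\lambda)=0$ and $\tau(a)=0$, one has
\[
D(a e_\lambda) \;=\; D(a)\cdot\sigma(e_\lambda) + \tau(a)\cdot D(e_\lambda) \;=\; 0,
\]
so $D(a)=\lim_\lambda D(a e_\lambda)=0$ by continuity. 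Once well-definedness is secured, the rest is the straightforward translation above, and the proof is complete.
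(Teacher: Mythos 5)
Your proof follows essentially the same route as the paper's: both reduce the proposition to a definition-chase through the correspondence $d(\sigma(a),\tau(a))=D(a)$ and the identity $x\cdot(\sigma(a),\tau(a))-(\sigma(a),\tau(a))\cdot x=x\cdot\sigma(a)-\tau(a)\cdot x$. You are in fact somewhat more careful than the paper, which silently treats only the $D_x^-$ form of inner derivation and asserts the well-definedness of $d$ without proof; your appeal to Proposition \ref{dplus} for the $D_x^+$ case and your approximate-identity argument showing $D$ annihilates $\ker(\sigma)\cap\ker(\tau)$ correctly fill both gaps.
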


\begin{proof} Let each $(\sigma,\tau)$-derivation
from ${\mathfrak A}$ to ${\mathcal X}$ be $(\sigma,\tau)$-inner.
Suppose that $d:{\mathfrak A}_{(\sigma,\tau)}\to{\mathcal X}$ is a
derivation. Then as mentioned above $D(a)=d(\sigma(a),\tau(a))$ is a
derivation from ${\mathfrak A}$ to ${\mathcal X}$. Hence there
exists an $x\in {\mathcal X}$ such that
$D(a)=x\cdot\sigma(a)-\tau(a)\cdot x$ for all $a\in {\mathfrak A}$.
This implies that
\[d(\sigma(a),\tau(a))=D(a)=x\cdot(\sigma(a),\tau(a))-(\sigma(a),\tau(a))\cdot
x\] for all $(\sigma(a),\tau(a))\in{\mathfrak A}_{(\sigma,\tau)}$.
Therefore $d$ is inner.

Conversely, let each derivation from ${\mathfrak A}_{(\sigma,\tau)}$
to ${\mathcal X}$ be inner. Suppose that $D:{\mathfrak
A}\to{\mathcal X}$ is a $(\sigma,\tau)$-derivation. Then again as
above $d(\sigma(a),\tau(a))=D(a)$ is a derivation from ${\mathfrak
A}_{(\sigma,\tau)}$ to ${\mathcal X}$. So there is an $x\in{\mathcal
X}$ such that $d(u)=x\cdot u-u\cdot x$ for all $u\in{\mathfrak
A}_{(\sigma,\tau)}$, or equivalently
$D(a)=d(\sigma(a),\tau(a))=x\cdot\sigma(a)-\tau(a)\cdot x$. Thus $D$
is $(\sigma,\tau)$-inner.
\end{proof}

%-------------------------------------------------------------------%

\begin{corollary} Let ${\mathfrak A}$ be a $C^*$-algebra, $\sigma:{\mathfrak
A}\to {\mathfrak A}$ be a $*$-homomorphism and $\mathcal X$ be a
Banach $\sigma({\mathcal A})$-bimodule. Then each
$\sigma$-derivation $D:{\mathfrak A}\to{\mathcal X}$ is
$\sigma$-inner if and only each derivation $d:\sigma({\mathfrak
A})\to{\mathcal X}$ is inner.
\end{corollary}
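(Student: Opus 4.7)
The plan is to deduce this as a direct specialization of the preceding Proposition to the case $\tau=\sigma$. First I would set up the identification ${\mathfrak A}_{(\sigma,\sigma)} \cong \sigma({\mathfrak A})$. By definition, ${\mathfrak A}_{(\sigma,\sigma)} = \{(\sigma(a),\sigma(a)) : a \in {\mathfrak A}\}$ is the diagonal subalgebra of ${\mathfrak A} \oplus {\mathfrak A}$, and the map $(\sigma(a),\sigma(a)) \mapsto \sigma(a)$ is clearly an isometric $*$-isomorphism onto $\sigma({\mathfrak A})$.

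Second, I would verify that under this identification, the ${\mathfrak A}_{(\sigma,\sigma)}$-bimodule structure on $\mathcal X$ supplied before the Proposition coincides with the given Banach $\sigma({\mathfrak A})$-bimodule structure on $\mathcal X$. Writing the general formula with $\tau=\sigma$ gives $x \cdot (\sigma(a),\sigma(a)) = x \cdot \sigma(a)$ and $(\sigma(a),\sigma(a)) \cdot x = \sigma(a) \cdot x$, which is exactly the original action.

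Third, under the correspondence $d(\sigma(a),\sigma(a)) = D(a)$, the $(\sigma,\sigma)$-inner derivation $D_x^-(a) = x \cdot \sigma(a) - \sigma(a) \cdot x$ matches precisely the inner derivation $u \mapsto x \cdot u - u \cdot x$ on $\sigma({\mathfrak A})$, and conversely. Thus ``every $\sigma$-derivation ${\mathfrak A} \to \mathcal X$ is $\sigma$-inner'' translates verbatim into ``every derivation ${\mathfrak A}_{(\sigma,\sigma)} \to \mathcal X$ is inner.''

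With these three routine observations in place, the corollary follows immediately by applying the Proposition with $\tau=\sigma$. There is no real obstacle: the only thing to be careful about is checking that both the algebra identification and the bimodule action identification are compatible with the notions of inner/$\sigma$-inner derivation, which amounts to rewriting formulas.
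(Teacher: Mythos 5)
Your proposal is correct and follows essentially the same route as the paper, which simply invokes the identification ${\mathfrak A}_{(\sigma,\sigma)}={\mathfrak A}/\ker(\sigma)=\sigma({\mathfrak A})$ and applies the preceding Proposition with $\tau=\sigma$. Your version merely spells out the bimodule and inner-derivation compatibility checks that the paper leaves implicit.
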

\begin{proof} Use the fact that ${\mathfrak A}_{(\sigma,\sigma)}={\mathfrak
A}/\ker(\sigma)=\sigma({\mathfrak A})$.
\end{proof}

%-------------------------------------------------------------------%

Under some restrictions on $\sigma$ and $\tau$ we mention our last
result concerning $(\sigma,\tau)$-amenability of ${\mathfrak A}$.

\begin{theorem}
Let ${\mathfrak A}$ be a $C^*$-algebra and let
$\sigma,\tau:{\mathfrak A}\to {\mathfrak A}$ be two
$*$-homomorphisms with $\ker(\sigma)=\ker(\tau)$. Then ${\mathfrak
A}$ is $(\sigma,\tau)$-amenable ($(\sigma,\tau)$-contractible,
resp.) if and only if $\sigma({\mathfrak A})$ is amenable
(contractible, resp.).
\end{theorem}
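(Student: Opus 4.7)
My plan is to route the proof through the intermediate $C^*$-algebra $\mathfrak{A}_{(\sigma,\tau)} = \{(\sigma(a),\tau(a)) : a\in\mathfrak{A}\}$ set up in the previous section, where the preceding proposition already reduces $(\sigma,\tau)$-derivations on $\mathfrak{A}$ to ordinary derivations on $\mathfrak{A}_{(\sigma,\tau)}$ for a fixed coefficient bimodule. The kernel hypothesis $\ker(\sigma)=\ker(\tau)$ will then simultaneously collapse this algebra onto $\sigma(\mathfrak{A})$ and make the bimodule correspondence bijective at the level of \emph{all} Banach bimodules, which is what is needed to convert that pointwise correspondence into a statement about amenability.

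First I would upgrade the preceding proposition from a single-bimodule statement to a statement about amenability. The assignment that sends a Banach $(\tau(\mathfrak{A}),\sigma(\mathfrak{A}))$-bimodule $\mathcal{X}$ to the same underlying space equipped with the $\mathfrak{A}_{(\sigma,\tau)}$-actions $(\sigma(a),\tau(a))\cdot x:=\tau(a)\cdot x$ and $x\cdot(\sigma(a),\tau(a)):=x\cdot\sigma(a)$ is a bijection onto Banach $\mathfrak{A}_{(\sigma,\tau)}$-bimodules; the inverse needs $\ker(\sigma)=\ker(\tau)$ precisely so that $\tau(a)\cdot x:=\psi(a)\cdot x$ and $x\cdot\sigma(a):=x\cdot\psi(a)$ are unambiguous. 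I would then check that this bijection intertwines the dual-bimodule construction, and that a $(\sigma,\tau)$-derivation $D:\mathfrak{A}\to\mathcal{X}$ is continuous iff its partner $d$ on $\mathfrak{A}_{(\sigma,\tau)}$ is continuous (using that $\psi$ is an open surjection of Banach algebras via the $C^*$-quotient theorem). Combined with the preceding proposition this yields the equivalence that $\mathfrak{A}$ is $(\sigma,\tau)$-amenable (resp.\ $(\sigma,\tau)$-contractible) iff $\mathfrak{A}_{(\sigma,\tau)}$ is amenable (resp.\ contractible).

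Next I would identify $\mathfrak{A}_{(\sigma,\tau)}$ with $\sigma(\mathfrak{A})$. By the first isomorphism theorem,
\[\mathfrak{A}_{(\sigma,\tau)} \;\cong\; \mathfrak{A}/(\ker(\sigma)\cap\ker(\tau)) \;=\; \mathfrak{A}/\ker(\sigma) \;\cong\; \sigma(\mathfrak{A}),\]
where each isomorphism is an isometric $*$-isomorphism of $C^*$-algebras (the last one uses that every $*$-homomorphism of $C^*$-algebras has closed range). Since amenability and contractibility of Banach algebras are invariants under isometric (indeed topological) isomorphism, $\mathfrak{A}_{(\sigma,\tau)}$ is amenable (contractible) iff $\sigma(\mathfrak{A})$ is, and chaining with the previous equivalence finishes the proof. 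The main obstacle, I expect, is the first step: the module bookkeeping needed to promote the per-bimodule correspondence of the preceding proposition into a genuine bijection of bimodule ``categories'' that respects dualization and continuity. Once that is in place, the remainder is essentially the first isomorphism theorem for $C^*$-algebras and the standard invariance of amenability under isomorphism.
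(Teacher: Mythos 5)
Your proposal is correct and follows essentially the same route as the paper: reduce to ordinary derivations on $\mathfrak{A}_{(\sigma,\tau)}$ via the bimodule correspondence (which requires $\ker(\sigma)=\ker(\tau)$ to be a genuine bijection of bimodules), then identify $\mathfrak{A}_{(\sigma,\tau)}$ with $\mathfrak{A}/\ker(\sigma)\cong\sigma(\mathfrak{A})$ by the first isomorphism theorem for $C^*$-algebras. You are in fact more explicit than the paper about the points that need checking (compatibility with dualization and continuity of the corresponding derivations), which the paper leaves implicit.
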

\begin{proof} Since $\ker(\sigma)=\ker(\tau)$ the relations
\begin{eqnarray*} x\cdot\sigma(a)&=&x\cdot(\sigma(a),\tau(a)),\\
\tau(a)\cdot x&=&(\sigma(a),\tau(a))\cdot x\end{eqnarray*} ensure us
to say that $\mathcal X$ is a Banach $\big(\tau({\mathfrak
A}),\sigma({\mathfrak A})\big)$-bimodule if and only if it is a
${\mathfrak A}_{(\sigma,\tau)}$-bimodule. Thus the correspondence
between $(\sigma,\tau)$-derivations on $\mathfrak A$ and derivations
on ${\mathfrak A}_{(\sigma,\tau)}$ helps us to complete the proof.
Note that in this case ${\mathfrak A}_{(\sigma,\tau)}$ is
\[{\mathfrak A}/{\ker(\sigma)\cap\ker(\tau)}={\mathfrak
A}/\ker(\sigma)=\sigma({\mathfrak A}).\]
\end{proof}

The following corollary includes a converse to Proposition
\ref{cor1} in the framework of $C^*$-algebras. The general case
where ${\mathcal A}$ is an arbitrary Banach algebra remains open.

%-------------------------------------------------------------------%
\begin{corollary}
Let ${\mathfrak A}$ be a $C^*$-algebra and let $\sigma:{\mathfrak
A}\to {\mathfrak A}$ be a $*$-homomorphism. Then ${\mathfrak A}$ is
$\sigma$-amenable ($\sigma$-contractible, resp.) if and only if
$\sigma({\mathfrak A})$ is amenable (contractible, resp.).
\end{corollary}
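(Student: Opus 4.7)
The plan is to obtain this corollary as the immediate specialization of the preceding theorem to the case $\tau=\sigma$. First I would recall from the introductory conventions that $\sigma$-amenability (resp.\ $\sigma$-contractibility) is defined to mean $(\sigma,\sigma)$-amenability (resp.\ $(\sigma,\sigma)$-contractibility); a Banach $\sigma({\mathfrak A})$-bimodule is nothing but a Banach $\big(\sigma({\mathfrak A}),\sigma({\mathfrak A})\big)$-bimodule, and a $\sigma$-derivation is exactly a $(\sigma,\sigma)$-derivation.

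Next, with $\tau:=\sigma$, both hypotheses of the preceding theorem are trivially satisfied: $\tau$ is a $*$-homomorphism since $\sigma$ is, and the coincidence $\ker(\sigma)=\ker(\tau)$ holds tautologically. Applying the theorem then delivers the equivalence: ${\mathfrak A}$ is $(\sigma,\sigma)$-amenable if and only if $\sigma({\mathfrak A})$ is amenable, with the analogous statement for contractibility. Unwinding the convention recorded in the first paragraph yields the stated corollary.

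There is no substantive obstacle here; the corollary is essentially bookkeeping. The only thing worth double-checking is that in the special case $\sigma=\tau$ the algebra ${\mathfrak A}_{(\sigma,\tau)}$ introduced in Section~4 collapses to $\sigma({\mathfrak A})$, so that the correspondence between $(\sigma,\sigma)$-derivations on ${\mathfrak A}$ and ordinary derivations really lands in $\sigma({\mathfrak A})$. This was already recorded in the intermediate corollary via ${\mathfrak A}_{(\sigma,\sigma)}={\mathfrak A}/\ker(\sigma)=\sigma({\mathfrak A})$, so no extra work is required beyond citing the theorem and this identification.
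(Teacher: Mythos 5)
Your proof is correct and matches the paper's (implicit) argument exactly: the corollary is stated there without proof as the immediate specialization $\tau=\sigma$ of the preceding theorem, with $\ker(\sigma)=\ker(\tau)$ holding tautologically and ${\mathfrak A}_{(\sigma,\sigma)}={\mathfrak A}/\ker(\sigma)=\sigma({\mathfrak A})$ as already recorded in the earlier corollary. Nothing further is needed.
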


%-------------------------------------------------------------------%
\begin{remark} If $\sigma$ is not homomorphism, then
the above result is not true in general. To see this, let $\mathfrak
A$ be an amenable $C^*$-algebra and $\sigma=\frac12 id_{\mathfrak
A}$, where $id_{\mathfrak A}$ is the identity mapping on $\mathfrak
A$. Note that $\sigma({\mathfrak A})={\mathfrak A}$. Let $\mathcal
X$ be a symmetric ${\mathfrak A}$-bimodule and define $D:{\mathfrak
A}\to{\mathcal X}^*$ by $D(a)=f\cdot a$, where $f$ is a fixed
element of ${\mathcal X}^*$. Then $D$ is a $\sigma$-derivation,
since
\[D(ab)=f\cdot ab=(f\cdot a)\cdot\frac{b}2+\frac{a}2\cdot(f\cdot
b)=D(a)\cdot\sigma(b)+\sigma(a)\cdot D(b).\] But $D$ is not inner.
In contrary, suppose that $D(a)=g\cdot\sigma(a)-\sigma(a)\cdot g$
for some fixed element $g$ of ${\mathcal X}^*$. Since $\mathcal X$
is symmetric, we have $D=0$ which contradicts to the definition of
$D$. Thus ${\mathfrak A}$ is not $\sigma$-amenable.
\end{remark}

\textbf{Acknowledgement.}  The author would like to thank the
anonymous referee for some useful comments improving the paper.
%-------------------------------------------------------------------%

\end{document}